\tikzstyle arrowstyle=[scale=1]
\def\@settitle{\begin{center}%
		\bfseries\Large
		\@title
	\end{center}%
}
\patchcmd{\@setauthors}{\MakeUppercase}{\normalsize}{}{}
\theoremstyle{plain}
\newtheorem{theorem}{Theorem}[section]		
\newtheorem{lemma}[theorem]{Lemma}
\newtheorem{proposition}[theorem]{Proposition}
\newtheorem{problem}[theorem]{Problem}
\theoremstyle{remark}
\newcommand{\beq}[1]{\begin{equation}\label{#1}}
\newcommand{\enq}[0]{\end{equation}}
\def\Prob{\mathbb{P}}
\def\E{\mathbb{E}}
\def\N{\mathbb{N}}
\def\R{\mathbb{R}}
\let\emptyset\varnothing
\newcommand{\eps}{\ensuremath{\varepsilon}}
\newcommand{\cH}{\mathcal{H}}
\newcommand{\cG}{\mathcal{G}}
\newcommand{\cF}{\mathcal{F}}
\newcommand{\cT}{\mathcal{T}_n}
\let\originalleft\left
\let\originalright\right
\renewcommand{\left}{\mathopen{}\mathclose\bgroup\originalleft}
\renewcommand{\right}{\aftergroup\egroup\originalright}
\def\imod#1{\allowbreak\mkern10mu({\operator@font mod}\,\,#1)}
\begin{document}

\title{Down-set thresholds}
\author{Benjamin Gunby}
\address{Department of Mathematics, Rutgers University, Piscataway, NJ 08854, USA}
\email{bg570@connect.rutgers.edu}

\author{Xiaoyu He}
\address{Department of Mathematics, Princeton University, Princeton, NJ 08540, USA}
\email{xiaoyuh@princeton.edu}

\author{Bhargav Narayanan}
\address{Department of Mathematics, Rutgers University, Piscataway, NJ 08854, USA}
\email{narayanan@math.rutgers.edu}

\date{9 December, 2021}
\subjclass[2010]{Primary 05C80; Secondary 60C05}

\maketitle
\begin{abstract}
We elucidate the relationship between the threshold and the expectation-threshold of a down-set. Qualitatively, our main result demonstrates that there exist down-sets with polynomial gaps between their thresholds and expectation-thresholds; in particular, the logarithmic gap predictions of Kahn--Kalai and Talagrand (recently proved by Park--Pham and Frankston--Kahn--Narayanan--Park) about up-sets do not apply to down-sets. Quantitatively, we show that any collection $\cG$ of graphs on $[n]$ that covers the family of all triangle-free graphs on $[n]$ satisfies the inequality $\sum_{G \in \cG} \exp(-\delta e(G^c) / \sqrt{n}) < 1/2$ for some universal $\delta > 0$, and this is essentially best-possible.

\vspace{3mm}
\noindent {\bf Keywords}: Kahn-Kalai conjecture, thresholds, down-sets, triangle-free graphs
\end{abstract}

\section{Introduction}
For a given finite set $X$, a family $\cF\subset 2^X$ is called an {\em up-set} if it is closed under taking supersets, a {\em down-set} if it is closed under taking subsets, and \emph{monotone} if it is either an up-set or a down-set.

Our main result demonstrates that there is no analogue for down-set thresholds of the conjectures of Kahn--Kalai~\citep{KK} (recently resolved by Park and Pham in \citep{ParkPham}) and Talagrand~\citep{Talagrand} (resolved in~\citep{FKNP}) regarding up-set thresholds. The following theorem, our main contribution, answers (in the negative) a question that arose in discussions between Kahn and the third author.

\begin{theorem}\label{thm:informal}
For the down-set $\cT$ of triangle-free graphs on the vertex set $[n]$, the asymptotic growth rates of the expectation-threshold and the fractional expectation-threshold of $\cT$ are both between $\sqrt{1/n}$ and $\sqrt{\log n / n}$,
while the asymptotic growth rate of the threshold of $\cT$ is $1/n$.
\end{theorem}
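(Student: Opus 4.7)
The plan addresses the three bounds separately.

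First, for the threshold $p_c(\cT) = \Theta(1/n)$, I appeal to the standard moment analysis: at $p = c/n$ the expected number of triangles in $G(n,p)$ tends to $c^3/6$, and Janson's inequality yields $\Prob(G(n,p) \in \cT) \to \exp(-c^3/6)$, which crosses $1/2$ at $c = (6\log 2)^{1/3}$.

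Second, for the upper bound on the (fractional) expectation-threshold, I exhibit an explicit cover using the Ramsey bound $R(3,t) = O(t^2/\log t)$, which implies every triangle-free graph on $[n]$ contains an independent set of size $s := C_1\sqrt{n\log n}$. Define $\cG := \{K_n \setminus E(K_S) : |S| = s\}$; for any $T \in \cT$, taking $S$ to be an independent set of $T$ of size $s$ gives $T \subseteq K_n \setminus E(K_S)$, so $\cG$ covers $\cT$. Since $|\cG| = \binom{n}{s}$ and $e(G^c) = \binom{s}{2}$ for each $G \in \cG$,
\[
\sum_{G \in \cG}(1-q)^{e(G^c)} = \binom{n}{s}(1-q)^{\binom{s}{2}} \leq \exp\Bigl(s\log(en/s) - q\binom{s}{2}\Bigr);
\]
at $q = C_2\sqrt{\log n/n}$ both terms in the exponent are of order $\sqrt{n}(\log n)^{3/2}$, and any choice of constants with $C_1 C_2 > 1$ makes the exponent negative (and of magnitude $\omega(1)$) so that the sum is at most $1/2$ for large $n$. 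Since this integer cover is automatically a fractional cover, it also bounds the fractional expectation-threshold from above.

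Third, the lower bound $\Omega(1/\sqrt{n})$ is the crux, and by monotonicity $q(\cT) \ge q_f(\cT)$ it suffices to prove it for the fractional version. By LP duality applied to the linear program defining the fractional expectation-threshold, this reduces to exhibiting a probability measure $\nu$ on $\cT$ satisfying the spread estimate $\nu(\{T : T \cap F = \emptyset\}) \leq C\exp(-\delta|F|/\sqrt{n})$ for every $F \subseteq E(K_n)$, where $q = \delta/\sqrt{n}$ and $C$ is an absolute constant. The main obstacle is that no single natural $\nu$ works uniformly over all $F$: a random balanced complete bipartite graph gives the required bound only at $q = O(1/n)$, failing badly for dense $F = E(K_{A'}) \cup E(K_{B'})$ coming from a different bipartition (where $\nu(T\cap F = \emptyset) \approx 2^{-n+O(\log n)}$ while the target is $\exp(-\Omega(n^{3/2}))$); random blow-ups of small non-bipartite triangle-free graphs such as Petersen or Kneser handle moderate cliques but degrade for cliques much larger than $\sqrt{n}$; and $G(n,q)$ conditioned on triangle-freeness fails for $F$ incident to many triangles, since a Janson-type entropy correction swamps the $(1-q)^{|F|}$ factor. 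I expect the resolution to be a carefully engineered mixture --- combining bipartite-like measures at different scales with a blown-up non-bipartite component, or pairing a narrowly conditioned random graph with a structural decomposition of $F$ by the density of its components --- and the main technical challenge is verifying the spread inequality uniformly in $F$.
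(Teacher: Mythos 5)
Your treatments of the threshold $p_c(\cT) = \Theta(1/n)$ and of the upper bound $q(\cT) = O(\sqrt{\log n/n})$ via a Ramsey-number cover are both correct and match the paper's arguments (the latter is exactly Proposition~\ref{prop:bounds-on-f}\eqref{ramsey} combined with Proposition~\ref{prop:threshold-vs-family}). Your reduction of the lower bound to exhibiting a spread measure $\nu$ on $\cT$ --- a distribution on triangle-free graphs with $\Prob_{G\sim\nu}(G\cap H=\emptyset) \le C\exp(-\delta e(H)/\sqrt{n})$ for every graph $H$ --- is also the right framing and is, up to constant-factor bookkeeping (the paper needs a small trick to absorb the constant $C$ in front), equivalent to what the paper establishes.

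The gap is that you do not produce such a $\nu$, and neither your three candidates nor the speculative mixture matches the paper's construction, which is the crux of the entire result. The paper takes $\Gamma\sim G(n,p)$ at $p\asymp n^{-1/2}$ (so $\Gamma$ is very far from triangle-free) and lets $G$ be a \emph{maximal triangle-free subgraph} of $\Gamma$. The point is not to directly engineer a sparse triangle-free distribution, but to exploit maximality: if some edge $e\in E(H)$ closes a triangle in $\Gamma$ whose two other edges lie outside $H$, then no maximal triangle-free $G\subset\Gamma$ can avoid $H$ --- otherwise $G\cup\{e\}$ would still be triangle-free, contradicting maximality. Hence the spread bound reduces to showing that, except with probability $O(\exp(-\delta e(H)/\sqrt{n}))$, a constant fraction of the edges of $H$ are \emph{not} already present in $(\Gamma\setminus H)^2$; this is \cref{lemma:single-H}, a large-deviations estimate for $e(H\cap\Gamma^2)$ that holds uniformly over all $H$. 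Its proof is the technical heart of the paper and requires passing to a random digraph model so that out-neighbourhoods decouple, allowing the exponential moment to factor (\cref{lemma:neighbourhood-concentration,lemma:single-H-reduced}). Without this construction and its accompanying concentration argument, the lower bound, and hence the theorem, is unproven.
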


The problem of locating thresholds has been a central concern in the study of random discrete structures since the seminal work of Erd\H{o}s and R\'enyi~\citep{ER} on threshold phenomena in random graphs. A great deal of work has since gone into locating thresholds of specific properties of interest; see~\citep{BBbook, JLR} and the many references therein, for example. 

Expectation-thresholds were introduced in~\citep{KK} as a comparatively easy (and rather general) way of locating thresholds of up-sets, so Theorem~\ref{thm:informal} comes as a bit of a surprise; the arguments needed to control the expectation-threshold of the aforementioned down-set $\cT$ are somewhat delicate, in stark contrast to what is needed to locate the threshold of $\cT$. We shall explain exactly where Theorem~\ref{thm:informal} fits into the general theory of thresholds once we fill in some background, a task to which we now turn.

For a given finite set $X$ and $p\in [0,1]$, we write $\mu_p$ for the product measure on the power set $2^X$ of $X$ given by
\[\mu_p(S) = p^{|S|}(1-p)^{|X\setminus S|}\]
for all $S \subset X$. For a non-trivial (i.e., not $2^X$ or $\emptyset$) monotone family $\cF\subset 2^X$, the {\em threshold $p_c(\cF)$} of $\cF$ is the unique $p$ for which $\mu_p(\cF)=1/2$; this is well-defined since $\mu_p(\cF)= \sum_{S \in \cF} \mu_p(S)$ is strictly increasing in $p$ when $\cF$ is a non-trivial up-set, and strictly decreasing in $p$ when $\cF$ is a non-trivial down-set.

Following Talagrand~\citep{Talagrand4, Talagrand1}, we say an up-set $\cF \subset 2^X$ is {\em $p$-small} if there is a certificate $\cG \subset 2^X$ such that $\cF\subset  \cG^{\uparrow}$, where $ \cG^{\uparrow}$ is the increasing family generated by $\cG$ (i.e. the family of all sets containing elements of $\cG$), and
\beq{psmall1}
\sum_{S\in \cG}p^{|S|} \leq 1/2;
\enq
in other words, $\cF$ is $p$-small if there is a simple `first-moment' proof of the fact that $p_c(\cF) \ge p$, namely
\[
\mu_p(\cF) \leq \mu_p  ( \cG^{\uparrow}) \le
	\sum_{S\in \cG}p^{|S|} \le 1/2.
\]
Analogously, we say a down-set $\cF \subset 2^X$ is {\em $p$-small} if there is a certificate $\cG \subset 2^X$ such that $\cF\subset \cG^{\downarrow}$, where $\cG^{\downarrow}$ is the decreasing family generated by $\cG$, and
\beq{psmall2}
\sum_{S\in \cG}(1-p)^{|X \setminus S|} \leq 1/2;
\enq
again, this means that there is a simple proof of the fact that $p_c(\cF) \le p$, namely
\[
\mu_p(\cF) \leq \mu_p  ( \cG^{\downarrow}) \le
	\sum_{S\in \cG}(1-p)^{|X \setminus S|} \le 1/2.
\]

Now, we define the {\em expectation-threshold $q(\cF)$} of a monotone $\cF$ as follows: for an up-set $\cF$, this is the \emph{largest} value $p \in [0,1]$ for which $\cF$ is $p$-small, and for a down-set $\cF$, this is the \emph{smallest} value $p \in [0,1]$ for which $\cF$ is $p$-small. 

Viewing the certificates $\cG$ in~\eqref{psmall1} and~\eqref{psmall2} as integral maps from $2^X$ to $\{0,1\}$, the consideration of fractional relaxations of these certificates leads us to the \emph{fractional expectation-threshold} $q_f(\cF)$ of a monotone $\cF$, which is the optimal value $p \in [0,1]$ for which there is a fractional certificate witnessing either the fact that $p_c(\cF) \ge p$ for an up-set $\cF$, or the fact that $p_c(\cF) \le p$ for a down-set $\cF$. Since we shall primarily focus on expectation-thresholds at this stage of the discussion, we defer a careful discussion of these fractional issues to Section~\ref{sec:frac}. Nevertheless, we note that it follows immediately from these definitions that for any up-set $\cF$, we have $q(\cF) \le q_f(\cF) \le p_c(\cF)$, while for any down-set $\cF$, we instead have $p_c(\cF) \le q_f(\cF) \le q(\cF)$.

This paper gets its motivation from~\citep{KK}, where Kahn and Kalai (drawing on a number of important examples in random graph theory) conjectured  that $p_c(\cF) = O(q(\cF)\log |X|)$ for any up-set $\cF \subset 2^X$, a conjecture recently proven by Park and Pham~\cite{ParkPham}. An earlier slight weakening of this result, originally conjectured by Talagrand~\citep{Talagrand}, was proved in~\citep{FKNP}, where it was shown that $p_c(\cF) = O(q_f(\cF)\log |X|)$ for any up-set $\cF \subset 2^X$. Since decreasing properties also appear quite frequently in the study of random discrete structures, it is natural to ask if there are analogues of the results of~\citep{ParkPham, FKNP} for down-sets. Concretely, our primary motivation is the following question that arose from discussions between Kahn and the third author.
\begin{problem}\label{motivation}
Is it true that $q(\cF)/p_c(\cF) \le (\log |X|)^{O(1)}$ for every down-set $\cF \subset 2^X$?
\end{problem}
Of course, down-sets are just complements of up-sets, so one must ask if there is any new content in Problem~\ref{motivation}, or if it is trivially resolved by the existing machinery for up-sets in~\citep{KK, ParkPham, Talagrand, FKNP}. Problem~\ref{motivation} is indeed nontrivial, but this merits a short explanation. The results of~\citep{ParkPham, FKNP} are all only meaningful for `large' up-sets $\cF$ for which $q(\cF) = o(1)$; indeed, if $\cF$ is `small' in the sense of $q(\cF) = \Omega(1)$, then $1 \ge p_c(\cF) \ge q(\cF)= \Omega(1)$, so the Park-Pham theorem for such `small' $\cF$ holds no content. Nevertheless, the following is a natural analogous question for `small' up-sets in the spirit of the Park-Pham theorem.
\begin{problem}\label{motivation2}
Is it true that $q'/p' \le (\log |X|)^{O(1)}$ for every up-set $\cF \subset 2^X$, where $p_c (\cF) = 1 - p'$ and $q(\cF) = 1 - q'$? 
\end{problem}
It is easy to check that Problems~\ref{motivation} and~\ref{motivation2} are equivalent, as every down-set $\cF$  can be turned into an up-set $\cF'=\{X-A: A\in\cF\}$ with $p(\cF')=1-p(\cF)$ and $q(\cF')=1-q(\cF)$. In this way, all our results on down-sets can also be reformulated in the complementary setting of `small' up-sets. In turn, Theorem~\ref{thm:informal} demonstrates that the answer to Problem~\ref{motivation} (and hence Problem~\ref{motivation2} as well) is in the negative: there exist down-sets with polynomial gaps between their thresholds and (fractional) expectation-thresholds.

We now turn to a discussion of the sharpness of Theorem~\ref{thm:informal}, as well as its proof.
For the down-set $\cT$ of triangle-free graphs on the vertex set $[n]$, it is an easy exercise to show that $p_c(\cT) = \Theta(1/n)$; see~\citep{AS}, for instance. Thus, the heart of the matter is to establish that $q(\cT) = \Omega(1/\sqrt{n})$ (along with a similar bound for $q_f(\cT)$). Therefore, writing $e(H)$ for the number of edges in a graph $H$, we restrict ourselves to proving the following.
\begin{theorem}\label{theorem:triangle-free-existence}
There exists a universal $\delta>0$ such that the following holds for all sufficiently large $n \in \N$. If $\cH$ is a collection of graphs on $[n]$, and 
\begin{equation}\label{equation:prob-assumption}
\sum_{H\in\cH}\exp\left({(-\delta e(H))/\sqrt{n}}\right)<\frac{1}{2}, 
\end{equation}
then there is a triangle-free graph $G$ on $[n]$ sharing at least one edge with each $H\in\cH$.
\end{theorem}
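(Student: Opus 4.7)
I would prove this probabilistically via a union bound. Specifically, I will construct a probability distribution $\mu$ supported on triangle-free graphs on $[n]$ such that for every graph $H$ on $[n]$,
\begin{equation*}
	\Pr_{G \sim \mu}\left[E(G) \cap E(H) = \emptyset\right] \le \exp(-\delta e(H)/\sqrt{n})
\end{equation*}
for some universal $\delta > 0$. Given such a $\mu$, summing over $H \in \cH$ and using the hypothesis on $\cH$,
\begin{equation*}
	\Pr_{G \sim \mu}\left[\exists H \in \cH : E(G) \cap E(H) = \emptyset\right] \le \sum_{H \in \cH} \exp(-\delta e(H)/\sqrt{n}) < 1/2,
\end{equation*}
so with positive probability the sampled $G$ shares an edge with every $H \in \cH$, and any realization with this property is the witness the theorem demands.

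\textbf{Choice of distribution.} The natural candidate for $\mu$ is the distribution of the final graph $G_\infty$ produced by the \emph{triangle-free random process}: starting from the empty graph on $[n]$, at each step select a uniformly random ``available'' edge (one whose addition to the current graph does not create a triangle), halting when no available edges remain. The asymptotic analysis of this process shows that $G_\infty$ is almost surely a maximal triangle-free graph with $\Theta(n^{3/2}\sqrt{\log n})$ edges, each potential edge is present in $G_\infty$ with probability $p^{*} = \Theta(\sqrt{\log n / n})$, and the edge-indicators of $G_\infty$ are approximately independent in a strong quantitative sense.

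\textbf{The main obstacle.} The crux is the avoidance estimate for $G \sim \mu$. Heuristically, pretending the edge-indicators of $G_\infty$ are independent Bernoullis with mean $p^{*}$, the avoidance probability would be roughly $(1-p^{*})^{e(H)} \le \exp(-p^{*} e(H))$, which is a $\sqrt{\log n}$ factor stronger than required. Rigorously, I would run the process in continuous time and track, via the differential-equation method, the set $O_t$ of currently available edges together with its intersection $F_t = O_t \cap E(H)$. At each infinitesimal step the chosen edge is a uniform element of $O_t$, so each surviving edge of $H$ is hit at a rate proportional to $|F_t|/|O_t|$; integrating up to the standard stopping time $t \asymp \sqrt{\log n}$ then yields the required exponential decay in $e(H)/\sqrt{n}$. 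The main technical obstacle is controlling correlations when $H$ is extremely dense or is clustered on few vertices, since in those regimes many edges of $F_t$ can leave $O_t$ simultaneously whenever a newly chosen edge closes many triangles with $H$. Freedman-type martingale concentration combined with a density-based case split on $H$ should absorb this correlated loss into the universal constant $\delta$.
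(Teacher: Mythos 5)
Your high-level strategy --- construct a random triangle-free $G$, prove a per-$H$ avoidance estimate of the form $\Pr[E(G) \cap E(H) = \emptyset] \le \exp(-\delta e(H)/\sqrt{n})$, and union-bound over $\cH$ --- is exactly the paper's. But you propose sampling $G$ from the triangle-free random process, and there is a genuine gap in making the avoidance estimate rigorous for that distribution. The differential-equation method gives concentration for trajectory statistics of the triangle-free process with failure probabilities that are at best $\exp(-n^{o(1)})$; here, when $e(H)$ is as large as $\Theta(n^2)$, you need tail bounds of order $\exp(-\Omega(n^{3/2}))$, and Freedman-type martingale inequalities applied to this heavily correlated, adaptively evolving process are not known to supply anything close. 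The ``strong approximate independence'' of the final edge indicators is likewise established only for bounded (or slowly-growing) edge collections, not for arbitrary $H$. You correctly flag the dense/clustered-$H$ case as the crux, but ``Freedman plus a density-based case split should absorb the loss into $\delta$'' is a hope, not an argument, and it is precisely the step that would have to constitute the entire content of the proof.

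The paper bypasses the triangle-free process entirely. It sets $\Gamma \sim G(n, \eps n^{-1/2})$ and lets $G$ be \emph{any} maximal triangle-free subgraph of $\Gamma$. It then shows that with probability $1 - O(\exp(-\gamma e(H)/\sqrt{n}))$, $\Gamma$ contains an ``$H$-good'' edge: an edge $e \in E(H)\cap E(\Gamma)$ that closes a triangle with two edges of $\Gamma \setminus H$. Such an edge forces any maximal triangle-free $G \subset \Gamma$ to meet $H$, since otherwise $G \cup \{e\}$ would remain triangle-free inside $\Gamma$, contradicting maximality. The technical core, Lemma~\ref{lemma:single-H}, is a large-deviations bound on how many edges of a fixed $H$ land in $\Gamma^2$; it is tractable precisely because the edges of $\Gamma$ are independent, and is proved by a decoupling trick that passes to random digraphs so that out-neighbourhoods become independent. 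This keeps all the concentration at the level of sums of independent Bernoullis rather than the triangle-free process. To salvage your route you would essentially need a new exponential-scale large-deviations theorem for the triangle-free process that, as far as I know, is not in the literature; the paper's choice of distribution is what makes the problem solvable with existing tools.
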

A certificate $\cG$ witnessing the expectation-threshold $q(\cT)$ is just a collection of graphs on $[n]$ that together cover all the triangle-free graphs on $[n]$. Taking $\cH$ to be the collection of complements of graphs in $\cG$, we see that Theorem~\ref{theorem:triangle-free-existence} is equivalent to the fact that $q(\cT) = \Omega(1/\sqrt{n})$. As we shall see in Section~\ref{sec:frac}, the proof of Theorem~\ref{theorem:triangle-free-existence}, with very minor alterations, also allows us to prove that $q_f(\cT) = \Omega(1/\sqrt{n})$, but we focus on $q(\cT)$ and Theorem~\ref{theorem:triangle-free-existence} to keep the presentation simple.

Both Theorems~\ref{thm:informal} and~\ref{theorem:triangle-free-existence} are best-possible up to logarithmic factors. As we shall see in Section~\ref{sec:unicov}, the machinery of hypergraph containers~\citep{cont1, cont2} quickly demonstrates that $q(\cT) = O(\log n / \sqrt{n})$, and a slightly more careful calculation using the Ramsey number upper bound of $r(3,k) = O(k^2/\log k)$ of Ajtai--Koml\'{o}s--Szemer\'{e}di~\citep{AKSRamsey} shows that in fact $q(\cT) = O(\sqrt{\log n/ n})$. This in turn demonstrates that the conclusion of Theorem~\ref{theorem:triangle-free-existence} need not hold if the $\sqrt{1/n}$ in the exponent in~\eqref{equation:prob-assumption} is replaced with anything growing faster than $\sqrt{\log n/ n}$.

We also remark that \cref{theorem:triangle-free-existence} immediately reproduces the Ramsey number lower bound of $r(3,k) = \Omega(k^2/\log^2 k)$; recall that $r(3,k)$ is the least integer $n$ for which each triangle-free graph on $n$ vertices has independence number at least $k$. The aforementioned lower bound was proved independently by Erd\H{o}s~\citep{Erdos} and Spencer~\citep{Spencer} (but is of course superseded by Kim's~\citep{KimRamsey} tour de force bound of $r(3,k) = \Omega(k^2/\log k)$, which gives the correct asymptotic growth rate). Indeed, for $C> 0$ large enough, taking $\cH$ to be the collection of all cliques of size $C\sqrt{n} \log n$ on $[n]$, we see that
\begin{align*}
\sum_{H\in \cH} \exp\left(-\delta e(H)n^{-1/2})\right) &= \binom{n}{C\sqrt{n} \log n}  \exp\left(-\delta C^2 \sqrt{n} \log ^2 n / 2\right)\\
&\le\exp\left((C-\delta C^2/2)\sqrt{n}\log^2 n\right)\\
&< \frac{1}{2},
\end{align*}
so \cref{theorem:triangle-free-existence} implies the existence of a triangle-free graph $G$ on the vertex set $[n]$ with no independent set of size $C\sqrt{n} \log n$.

The proof of \cref{theorem:triangle-free-existence} proceeds by generalising of Erd\H{o}s' lower bound for $r(3,k)$. Erd\H{o}s built triangle-free graphs with large independence number by starting with an Erd\H{o}s--R\'enyi random graph $\Gamma = G(n,p)$ with $p \asymp n^{-1/2}$ and picking a maximal triangle-free subgraph $G\subset \Gamma$; he showed that with high probability, such a $G$ has an edge in every set of vertices of size at least $\sqrt{n}\log n$. We shall show, more generally, that for any choice of $\cH$ satisfying (\ref{equation:prob-assumption}), it is likely for a similarly constructed triangle-free $G$ to share an edge with every $H\in \cH$. 

One of the technical tools that we need to execute the above strategy is a large-deviations estimate for the number of triangles completed by \emph{any} fixed set of edges in the random graph. This large-deviations estimate, and the mechanism we use to `preserve independence' in the proof of this estimate (which involves passing from random graphs to random directed graphs and back) might both be of independent interest; the details appear in Sections~\ref{sec:proof} and~\ref{sec:ldproof}.

This paper is organised as follows. The proof of \cref{theorem:triangle-free-existence}, modulo the large-deviations estimate mentioned above, is given in Section~\ref{sec:proof}, and the beef follows in Section~\ref{sec:ldproof} where the requisite large-deviations estimate is established. The fractional analogue of our main result is stated and sketched in Section~\ref{sec:frac}. We present the constructions demonstrating the sharpness of our results in Section~\ref{sec:unicov}. Finally, we close in Section~\ref{sec:conc} with a discussion of some open problems.

\section{Proof of the main result}\label{sec:proof}

Writing $G(n,p)$ for the Erd\H{o}s--R\'enyi random graph, the proof of Theorem~\ref{theorem:triangle-free-existence} follows from a large-deviations estimate for the number of triangles in $G(n,p)$ `closed' by the addition of \emph{any} given set of $m$ edges. To state this lemma, we need a little bit of notation: given a graph $\Gamma$, we write $e(\Gamma)$ for the number of edges of $\Gamma$, $\Delta(\Gamma)$ for the maximum degree of $\Gamma$, and $\Gamma^2$ for the square of $\Gamma$, which is the graph on the same vertex set as $\Gamma$ whose edges are all pairs of vertices with a common neighbour in $\Gamma$. At the heart of our argument is the following fact.

\begin{lemma}\label{lemma:single-H}
There exist universal $\gamma,\eps>0$ such that the following holds for all sufficiently large $m,n \in \N$. Let $H$ be a graph on $[n]$ with $m$ edges. If $\Gamma \sim G(n,p)$ with $p\le\eps n^{-1/2}$, then
\[
\Prob\left(e(H\cap \Gamma^2) \ge 3m/4 \bigwedge \Delta(\Gamma) \le 2pn\right) \le 15 \exp\left(-\gamma mn^{-1/2}\right).
\]
\end{lemma}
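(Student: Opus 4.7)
The sum $S := \sum_{e \in H}\mathbf{1}[e \in \Gamma^2]$ has mean at most $m(n-2)p^2 \le m\eps^2$, which is vastly less than $3m/4$ for small $\eps$, so the lemma is genuinely a large-deviation bound on a sum of positively correlated indicators. A naive edge-exposure martingale has per-step Lipschitz constant $O(pn)=O(\eps\sqrt n)$ under the max-degree condition, yielding only an Azuma rate of $\exp(-O(m^2/(\eps^2 n^3)))$, which falls well short of $\exp(-\gamma m/\sqrt n)$. The plan is to decouple the indicators through a random directed graph and then apply a sharper concentration argument.

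\textbf{Step 1: random orientation and four-way splitting.} Realise $\Gamma$ as the undirected shadow of $D \sim D(n,r)$, where each directed edge is independently present with probability $r = 1 - \sqrt{1-p}$; this matches $\Gamma \sim G(n,p)$, and the crucial gain is that each of the families $\{N^+(w)\}_{w}$ and $\{N^-(w)\}_{w}$ of out- and in-neighbourhoods in $D$ consists of mutually independent random subsets (with each vertex included with probability $r$). For every $e = uv \in H$, the event $e \in \Gamma^2$ requires some $w$ with $wu,wv \in \Gamma$; splitting by the orientations of $wu$ and $wv$ in $D$ gives
\[
\mathbf{1}[e \in \Gamma^2] \;\le\; X_e^{++} + X_e^{+-} + X_e^{-+} + X_e^{--},
\]
where, e.g., $X_e^{++} = \mathbf{1}[\exists w : u,v \in N^+(w)]$ and $X_e^{+-} = \mathbf{1}[\exists w : u \in N^+(w),\, v \in N^-(w)]$. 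A union bound reduces the claim to showing $\Prob(\sum_e X_e^{\sigma} \ge 3m/16) \le \exp(-\Omega(m/\sqrt n))$ for each of the four patterns $\sigma$.

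\textbf{Step 2: the case analysis.} For the pure patterns $\sigma \in \{++,--\}$, focus on $++$: then $\sum_e X_e^{++} \le \sum_w Y_w$ with $Y_w := e(H[N^+(w)])$, and by Step 1 the $Y_w$'s are \emph{independent}. Each $Y_w$ is a quadratic form in the independent Bernoulli$(r)$ indicators $\{\mathbf{1}[v \in N^+(w)]\}_{v}$; its mean is $\E Y_w = mr^2 \le m\eps^2/n$, and on the event $|N^+(w)| \le 2pn$ (implied by $\Delta(\Gamma) < 2pn$) it is at most $\binom{2pn}{2} = O(\eps^2 n)$. I would bound the truncated exponential moment $\E[\exp(\lambda Y_w)\mathbf{1}\{|N^+(w)| \le 2pn\}]$, multiply over $w$ using independence, and apply Markov with a carefully tuned $\lambda$. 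For the mixed patterns $\sigma \in \{+-,-+\}$, one has $\sum_e X_e^{+-} \le \sum_{u,v,w}\mathbf{1}[uv \in H]\,A_{v,w} B_{w,u}$, a bilinear form in two independent families of Bernoullis $A_{v,w} = \mathbf{1}[v \to w \in D]$ and $B_{w,u} = \mathbf{1}[w \to u \in D]$; conditioning on $\{A_{v,w}\}$ turns the inner sum into a weighted linear combination of the independent $B_{w,u}$'s, to which a weighted Bernstein bound applies, with the weights controlled by $\Delta(\Gamma) < 2pn$. Summing the four case bounds, together with appropriate bookkeeping for the max-degree truncation, should produce the prefactor $15$.

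\textbf{Main obstacle.} The crux is the exponential-moment calculation in Step 2 for the independent $Y_w$'s. The range-to-mean ratio $\Theta(n^2/m)$ is too large for off-the-shelf Bernstein or Azuma inequalities to yield the $\exp(-\gamma m/\sqrt n)$ rate. The right bound must exploit both the smallness of the membership probability $r = \Theta(n^{-1/2})$ in $N^+(w)$ and the specific polynomial-in-Bernoulli structure of $Y_w = e(H[N^+(w)])$, so that the MGF reflects the fact that large values of $Y_w$ are \emph{structured} (they require many vertices to lie inside $N^+(w)$ in a very $H$-rich configuration) rather than merely bounded, and therefore contribute only at an exponentially suppressed rate in $m/\sqrt n$.
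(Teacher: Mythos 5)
You have correctly identified the paper's central idea: pass from $G(n,p)$ to a random directed graph to decouple the dependence, reduce to bounding $\sum_w Y_w$ where $Y_w = e(H[N^+(w)])$ are \emph{independent}, and attack this sum via a truncated exponential moment. Your coupling $r = 1 - \sqrt{1-p}$ is the same one the paper uses (written as $2p' - (p')^2 = p$). There are, however, two departures from the paper --- one inessential and one that leaves the argument incomplete.

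The inessential departure is the four-way orientation split. The paper dispenses with the mixed patterns $\sigma \in \{+-,-+\}$ entirely via a cheap conditional Markov step: if $uv \in \Gamma^2$ through a witness $w$, then under the coupling each of the undirected edges $wu, wv$ is realised by the specific directed edge $w \to u$ (resp.\ $w \to v$) with conditional probability at least $1/2$, so $\Prob(uv \in \widehat{D} \mid \Gamma) \ge 1/4$. Thus $e(H' \cap \Gamma^2) \ge m'/2$ forces $\E\bigl[e(H'\cap\widehat{D}) \mid \Gamma\bigr] \ge m'/8$, and Markov's inequality applied to $m' - e(H'\cap\widehat{D}) \in [0,m']$ gives $\Prob\bigl(e(H'\cap\widehat D) \ge m'/16 \mid \Gamma\bigr) \ge 1/15$. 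This both produces the prefactor $15$ and confines the concentration argument to the single pattern $++$ (which is exactly what $\widehat{D}$ encodes). Your weighted-Bernstein treatment of the bilinear mixed patterns is plausible but unnecessary, and you would have to redo the careful degree bookkeeping once more for those cases.

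The genuine gap is the one you flag as the ``main obstacle'': you do not prove the truncated MGF bound $\E\bigl[\exp(Y_w / (5pn))\,\mathbf{1}\{|N^+(w)|\le 5pn\}\bigr] \le \exp(pm/n)$, you only note that off-the-shelf inequalities fail. This bound (\cref{lemma:neighbourhood-concentration}) is the technical heart of the lemma, and its derivation rests on an idea absent from your proposal: first pass to a \emph{bipartite} subgraph $H'$ of $H$ with at least $m/2$ edges (a harmless constant-factor loss), and then expose $N^+(w)$ in two stages, first $U_L = N^+(w)\cap V_L$, then $U_R = N^+(w)\cap V_R$. Conditional on $U_L$, each $i \in V_R$ contributes $d_i X_i$ to $Y_w$ with $d_i$ deterministic and the $X_i$ independent Bernoulli$(r)$, so the conditional MGF factorises over $V_R$; the truncation $|U_L| \le 5pn$ forces $d_i/(5pn) \le 1$, which permits the elementary convexity bound $e^x - 1 \le (e-1)x$ on $[0,1]$. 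A second application of the same inequality (now with $a_j/(2n) \le 1$ for $a_j = \deg_{H'}(j)$) collapses the resulting product over $V_L$ to $\exp(pm/n)$. Without the bipartite split this two-stage exposure is unavailable and there is no factorisation to exploit, so your Step 2 does not go through as stated. Your proposal frames the problem correctly but does not close it.
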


We defer the proof of \cref{lemma:single-H} to the next section, but briefly mention why the lemma demands that we work within the event $\{\Delta(\Gamma) < 2pn\}$; by considering the case where $H$ is a star with $m=\Theta(n)$ edges, we see that $\Prob(e(H\cap \Gamma^2) \ge 3m/4) = p^{\Theta(n)}$, thus demonstrating that the bound in \cref{lemma:single-H} does not hold uniformly in $H$ for $\Prob(e(H\cap \Gamma^2) \ge 3m/4)$. We now describe how to deduce \cref{theorem:triangle-free-existence} from \cref{lemma:single-H}.

\begin{proof}[Proof of \cref{theorem:triangle-free-existence}.]
Take $\gamma, \eps>0$ to be the constants in \cref{lemma:single-H}, and put $p=\eps n^{-1/2}$ and $\Gamma= G(n,p)$. For $H\in \cH$, we say that an edge $uv\in E(H)$ is \textit{$H$-good} if $uv \not \in E((\Gamma\setminus H)^2)$, i.e., $uv$ is an edge of $H$ that does not form any triangle $uvw$ where $uw,vw \in E(\Gamma \setminus H)$. 

We claim that if $\Gamma$ contains an $H$-good edge, then every maximal triangle-free subgraph of $\Gamma$ intersects $H$. Indeed, suppose for contradiction that $\Gamma$ contains an $H$-good edge $e$, and that $G$ is a maximal triangle-free subgraph of $\Gamma$ with $G \cap H =\emptyset$. We see that $G \cup \{e\}$ is also triangle-free, since any triangle thereof must contain $e$, but this would violate the $H$-goodness of $e$. This would imply that $G \cup \{e\}$ is also a triangle-free subgraph of $\Gamma$, violating the maximality of $G$.

It remains to show that with positive probability, $\Gamma$ contains an $H$-good edge for every $H \in \cH$. This proves the theorem, as any maximal triangle-free subgraph $G$ of $\Gamma$ then demonstrates the desired result. To this end, we define three types of bad events, and show that they are all unlikely to happen. Let $Z$ be the event that $\Delta(\Gamma) > 2pn$. For $H\in \cH$, let $Y_H$ be the event that $e(H \cap \Gamma^2) \ge 3e(H)/4$, and let $X_H$ be the event that $\Gamma$ contains no $H$-good edges. 

Writing $\overline{E}$ for the complement of an event $E$, we have
\begin{equation}\label{equation:prob-breakdown}
\Prob\left(\bigvee_{H\in \cH} X_H\right) \le \Prob(Z) + \sum_{H \in \cH} \Prob\left(Y_H \wedge \overline{Z}\right) + \sum_{H\in \cH} \Prob\left(X_H \wedge \overline{Y_H}\right),
\end{equation}
since either $Z$ occurs, some $Y_H$ occurs without $Z$, or else some $X_H$ occurs without the corresponding $Y_H$. We now treat the three terms on the right hand side of (\ref{equation:prob-breakdown}) separately. 

For the first, it is easily seen that $\Prob(Z)\rightarrow 0$ as $n\rightarrow \infty$. For the second term, we apply \cref{lemma:single-H} to obtain
\begin{equation}\label{equation:gamma}
\Prob\left(Y_H \wedge \overline{Z}\right) \le 15\exp\left(-\gamma e(H)n^{-1/2}\right).
\end{equation}
For the third and final term in (\ref{equation:prob-breakdown}), note that $\Prob(X_H \wedge \overline{Y_H}) \le \Prob(X_H \,|\, \overline{Y_H})$. We can understand this conditional probability of $X_H$ by exposing the randomness of $\Gamma = G(n,p)$ in two stages. First, we expose the edges of $\Gamma$ outside $H$ and count the number of $H$-good edges in $H$. We then expose the edges of $\Gamma$ in $H$, noting that when doing this, each edge of $H$ has an independent $p$ chance of lying inside $\Gamma$. Thus,
\begin{align}
\Prob\left(X_H \,|\, \overline{Y_H}\right) & \le \Prob\left(X_H \,|\, e(H \setminus (\Gamma\setminus H)^2) \ge e(H)/4\right) \nonumber\\
& \le (1-p)^{e(H)/4} \nonumber\\
& \le \exp\left(-\frac{\eps}{4} e(H) n^{-1/2}\right).\label{equation:epsilon}
\end{align}

Now, we fix $\delta = \min(\eps/4, \gamma)/5$. We know from~\eqref{equation:prob-assumption} that
\[
\sum_{H\in \cH} \exp\left(-\delta e(H)n^{-1/2}\right) < \frac{1}{2};
\]
in particular, each individual term in the sum is less than $1/2$. Thus,
\[
\sum_{H\in \cH} \exp\left(-5\delta e(H)n^{-1/2}\right) < \frac{1}{16}\sum_{H\in \cH} \exp\left(-\delta e(H)n^{-1/2}\right) < \frac{1}{32}.
\]
Putting this together with (\ref{equation:prob-breakdown}), (\ref{equation:gamma}), and (\ref{equation:epsilon}), we find that
\begin{align*}
\Prob\left(\bigvee_{H\in \cH} X_H\right) & \le \sum_{H\in \cH} \left(15\exp\left(-\gamma e(H)n^{-1/2}\right)+ \exp\left(-\frac{\eps}{4} e(H) n^{-1/2}\right)\right) + o(1) \\
& \le 16\sum_{H\in \cH} \exp\left(-5\delta e(H)n^{-1/2}\right) + o(1) < 1
\end{align*}
for all $n\in\N$ sufficiently large, as desired. 

We have shown that with positive probability, $\Gamma$ contains an $H$-good edge for every $H \in \cH$. Thus, with positive probability, any maximal triangle-free subgraph $G$ of $\Gamma$ will share an edge with every $H\in \cH$, as desired.
\end{proof}

\section{Large-deviations for closed triangles}\label{sec:ldproof}
Our proof of \cref{lemma:single-H} needs us, amongst other things, to show that for a fixed graph $H$ on $[n]$ and a random set $U\subset [n]$ containing each vertex with probability $p$, the number of edges in the induced subgraph $H[U]$ is concentrated. In particular, we need the following technical bound on the exponential moment of $e(H[U])$ in the case where $H$ is bipartite.

\begin{lemma}\label{lemma:neighbourhood-concentration}
Let $H$ be a bipartite graph on $[n]$ with $m$ edges, and let $U$ be a random subset of $[n]$ such that each vertex is chosen independently with the same probability $p\in [0,1]$. Let $X$ be the random variable counting the number of edges in $H[U]$, and let $Z$ be the indicator random variable of the event $|U|\leq 5pn$. Then
\[\mathbb{E}\left[\exp\left(\frac{ZX}{5pn}\right)\right]\leq \exp\left({\frac{pm}{n}}\right).\]
\end{lemma}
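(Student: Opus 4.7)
My plan is to exploit the bipartite structure of $H$ via a two-stage moment-generating-function computation, combined with a careful handling of the truncation indicator $Z$. Fix a bipartition $[n] = A \sqcup B$ of $H$, and set $U_A = U \cap A$, $U_B = U \cap B$. For each $b \in B$ let $d_b(U_A) = |N_H(b) \cap U_A|$; then conditional on $U_A$, we have $X = \sum_{b \in B} \mathbbm{1}[b \in U_B]\, d_b(U_A)$, a sum of independent $p$-Bernoullis scaled by the constants $d_b(U_A)$.

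The central difficulty is that a direct bound on $\E[\exp(X/(5pn))]$ is too weak, since $X$ can spike when $|U|$ is atypically large, so I must use $Z$ to truncate. My workaround is the pointwise identity $\exp(ZX/(5pn)) = 1 + Z(\exp(X/(5pn)) - 1)$ together with the observation $Z \le \mathbbm{1}[|U_A| \le 5pn]$ (since $|U_A| \le |U|$), which reduces matters to bounding $\E[\mathbbm{1}[|U_A| \le 5pn](\exp(X/(5pn)) - 1)]$. Conditional on $U_A$ with $|U_A| \le 5pn$, each $d_b(U_A) \le 5pn$, so $d_b(U_A)/(5pn) \le 1$; using $e^y - 1 \le (e-1)y$ for $y \in [0,1]$ and $1 + x \le e^x$, the inner MGF satisfies
\[
\E[\exp(X/(5pn)) \mid U_A] \le \exp\left(\frac{e-1}{5n}\sum_{a \in U_A} \deg_H(a)\right),
\]
after invoking the double-counting identity $\sum_{b \in B} d_b(U_A) = \sum_{a \in U_A} \deg_H(a)$.

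Since the right-hand side above is at least $1$, the indicator on $|U_A| \le 5pn$ may be dropped; integrating over $U_A$ and writing $c = (e-1)/(5n)$ then gives $\prod_{a \in A}(1 - p + p e^{c \deg_H(a)})$ as the outer expectation. Since $c\deg_H(a) \le cn = (e-1)/5 < 1$, a second application of the same convexity bound (together with $\sum_a \deg_H(a) = m$) yields a bound of the form $\exp(C pm/n)$ for an absolute constant $C < 1$ (numerically around $0.4$). This beats the target $\exp(pm/n)$ with room to spare. The main obstacle, as foreshadowed, is correctly handling the truncation by $Z$: the identity $\exp(ZX/(5pn)) = 1 + Z(\exp(X/(5pn)) - 1)$ is exactly what lets us convert $Z$ into a one-sided truncation on $|U_A|$ alone, without needing a separate truncation on $|U_B|$; everything else is routine MGF bookkeeping via two nested applications of convexity.
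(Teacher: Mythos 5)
Your proposal is correct and mirrors the paper's argument almost exactly: both use the bipartition to write $X$ as a sum of independent degree-scaled Bernoullis over the right side, truncate via $Z \le \mathbbm{1}[|U_A|\le 5pn]$ using the identity $\exp(ZX/(5pn)) = 1 + Z(\exp(X/(5pn))-1)$, and apply the convexity bound $e^y - 1 \le (e-1)y$ twice (once per side of the bipartition) with a double-counting step in between. The only cosmetic difference is that the paper folds the $(e-1)$ factors into cleaner coefficients ($d_i/(2pn)$, $a_j/n$) along the way, whereas you keep $c = (e-1)/(5n)$ explicit; the final constants differ but both comfortably beat the target $\exp(pm/n)$.
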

\begin{proof}
Let $V_L\cup V_R$ be a bipartition of the vertex set of $H$, and let $U_L=U\cap V_L$ and $U_R=U\cap V_R$. We expose the random set $U$ by first exposing the random set $U_L$ and then exposing $U_R$.

For $i\in V_R$, let $d_i$ be the random variable counting the number of neighbours of $i$ in $U_L$, and let $X_i$ be the indicator random variable of the event $i\in U_R$, so that $X=\sum_{i\in V_R} d_iX_i$. Notice that $d_i$ depends only on $U_L$ and not on $U_R$, so conditional on any exposure of $U_L$, the random variables $\{d_i X_i:i\in V_R\}$ are independent. Therefore, taking $Z_L$ to be the indicator of the event $|U_L|\leq 5pn$, and noting that $Z_L$ is $\{0,1\}$-valued and that $Z\leq Z_L$, we get
\begin{align}
    \nonumber \mathbb{E}\left[\exp\left({\frac{ZX}{5pn}}\right)\right] & \leq \mathbb{E}\left[\exp\left({\frac{Z_LX}{5pn}}\right)\right] =\mathbb{E}\left[1-Z_L+Z_L\exp\left({\frac{X}{5pn}}\right)\right] \\ & \nonumber =\mathbb{E}_{U_L}\left[1-Z_L+\mathbb{E}_{U_R}\left[Z_L\exp\left({\frac{X}{5pn}}\right)\right]\right] \\ & \nonumber =\mathbb{E}_{U_L}\left[1-Z_L+Z_L\mathbb{E}_{U_R}\left[\exp\left({\frac{\sum_{i\in V_R} d_iX_i}{5pn}}\right)\right]\right] \\ & \nonumber =\mathbb{E}_{U_L}\left[1-Z_L+Z_L\prod_{i\in V_R}\mathbb{E}_{X_i}\left[\exp\left({\frac{d_iX_i}{5pn}}\right)\right]\right] \\ & \label{rightsidebound}=\mathbb{E}_{U_L}\left[1-Z_L+Z_L\prod_{i\in V_R}\left(1-p+p\exp\left({\frac{d_i}{5pn}}\right)\right)\right],
\end{align}
the last equality holding since $X_i$ is a Bernoulli random variable with $\Prob(X_i = 1) = p$. 

We now state a simple estimate that we will use to control~\eqref{rightsidebound}: for all $0\leq x\leq 1$, we have
\beq{taylor}
e^x-1\leq (e-1)x;
\enq
indeed, equality holds at $x=0$ and $x=1$ and the right hand side is linear, so this bound follows from convexity of $e^x-1$.

Returning to~\eqref{rightsidebound}, if $Z_L=1$, then $d_i\leq |U_L|\leq 5pn$; thus~\eqref{taylor} with $x={d_i}/{5pn}$ implies that either $Z_L=0$ or \[\exp\left({\frac{d_i}{5pn}}\right)-1\leq\frac{(e-1)d_i}{5pn}\leq\frac{d_i}{2pn}.\] 
Therefore, \[
    Z_L\prod_{i\in V_R}\left[1-p+p\exp\left({\frac{d_i}{5pn}}\right)\right]  \leq Z_L\prod_{i\in V_R}\left(1+\frac{d_i}{2n}\right) \leq Z_L\exp\left({\frac{\sum_{i\in V_R} d_i}{2n}}\right),
\]
which when plugged into (\ref{rightsidebound}) gives
\begin{equation}\label{rightsidedone}
    \mathbb{E}\left[\exp\left({\frac{ZX}{5pn}}\right)\right]\leq \mathbb{E}_{U_L}\left[1-Z_L+Z_L\exp\left({\frac{\sum_{i \in V_R} d_i}{2n}}\right)\right]\leq \mathbb{E}_{U_L}\left[\exp\left({\frac{\sum_{i\in V_R} d_i}{2n}}\right)\right].
\end{equation}

Now, for $j\in V_L$, let $a_j=\deg_H(j)$ and let $Y_j$ be the indicator random variable of the event $j\in U_L$, and note that \[\sum_{i \in V_R} d_i=\sum_{j \in V_L} a_jY_j\] since both sides count the number of edges between $U_L$ and $V_R$. As the random variables $\{Y_j: j \in V_L\}$ are independent, we get
\begin{align}
    \nonumber \mathbb{E}_{U_L}\left[\exp\left({\frac{\sum_{i\in V_R} d_i}{2n}}\right)\right] & =\mathbb{E}_{U_L}\left[\exp\left({\frac{\sum_{j\in V_L} a_jY_j}{2n}}\right)\right] \\ & \nonumber =\prod_{j\in V_L}\mathbb{E}_{Y_j}\left[\exp\left({\frac{a_jY_j}{2n}}\right)\right] \\ & \label{leftsidebound}=\prod_{j\in V_L}\left(1-p+p\exp\left({\frac{ a_j}{2n}}\right)\right),
\end{align}
the last equality holding since $Y_j$ is a Bernoulli random variable with $\Prob(Y_i = 1) = p$.  

To finish, we see that since  $a_j\leq |V_R|\leq n$, the bound~\eqref{taylor} with $x={a_j}/{2n}$ gives 
\[\exp\left({\frac{a_j}{2n}}\right)-1\leq \frac{(e-1)a_j}{2n}\leq\frac{a_j}{n}.\] 
Substituting this bound into~\eqref{leftsidebound}, then substituting the result into~\eqref{rightsidedone}, and using the fact that $\sum_{j \in V_L} a_j=m$, we get
\begin{align*}
    \mathbb{E}\left[\exp\left({\frac{ZX}{5pn}}\right)\right] & \leq\prod_{j\in V_L}\left(1-p+p\exp\left({\frac{a_j}{2n}}\right)\right) \\ & \leq \prod_{j\in V_L}\left(1+\frac{p a_j}{n}\right) \\ & \leq \exp\left({\frac{p\sum_{j \in V_L} a_j}{n}}\right) =\exp\left({\frac{pm}{n}}\right),
\end{align*}
completing the proof.
\end{proof}

To prove \cref{lemma:single-H}, it will be more convenient to work with random directed graphs. This model gives us some additional independence that is crucial in proving the directed analogue of \cref{lemma:single-H} below. We shall subsequently show that \cref{lemma:single-H} follows from its directed analogue.

We need a little more notation. We write $\vec{G}(n,p)$ for the random directed graph (or digraph, for short) on $[n]$, where each directed edge $(u,v)$ appears independently with probability $p$, and pairs of anti-parallel edges $(u,v)$ and $(v,u)$ are allowed to be simultaneously present. For a digraph $D$, we write $\Delta(D)$ for its maximum out-degree, and $\widehat{D}$ for the undirected graph whose edges are pairs of distinct vertices in $D$ with a common in-neighbour. The directed analogue of \cref{lemma:single-H} is as follows.

\begin{lemma}\label{lemma:single-H-reduced}
There exist universal $\gamma',\eps' > 0$ such that the following holds for all sufficiently large $m, n \in \N$. Let $H$ be a bipartite graph on $[n]$ with $m$ edges. If $D \sim \vec{G}(n,p)$ with $p\le \eps' n^{-1/2}$, then
\[
\Prob\left(e(H\cap \widehat{D}) \geq m/16 \bigwedge \Delta(D) \leq 5pn-1\right) \le \exp\left({-\gamma' mn^{-1/2}}\right).
\]
\end{lemma}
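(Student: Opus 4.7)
The plan is to expose $D$ through its out-neighborhoods $N^+(w)$, $w\in[n]$. These are mutually independent random subsets (each vertex included with probability $p$), and this is exactly the point of passing to the directed model flagged just above the statement: in $G(n,p)$ the neighborhoods $N(u)$ and $N(v)$ are coupled through the edge $uv$, whereas in $\vec{G}(n,p)$ the out-neighborhoods of distinct vertices are genuinely independent. Writing $X_w:=e(H[N^+(w)])$, I would first observe the deterministic domination
\[
e(H\cap\widehat D)\le \sum_{w\in[n]} X_w,
\]
since every edge $\{u,v\}\in E(H\cap\widehat D)$ has some common in-neighbor $w$ and therefore lies in $H[N^+(w)]$.

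Next, on the event $\{\Delta(D)\le 5pn-1\}$, each $|N^+(w)|\le 5pn$, so the indicator $Z_w$ from Lemma~\ref{lemma:neighbourhood-concentration} (applied with $U=N^+(w)$) is identically $1$. I would apply the exponential Markov inequality at inverse temperature $1/(5pn)$ to the random variable $\mathbbm{1}[\Delta(D)\le 5pn-1]\cdot e(H\cap\widehat D)$, which is dominated by $\sum_w Z_w X_w$ whether or not the max-degree event holds, giving
\[
\Prob\bigl(e(H\cap\widehat D)\ge m/16 \wedge \Delta(D)\le 5pn-1\bigr)\le \exp(-m/(80pn))\cdot \E\Bigl[\prod_{w\in[n]} \exp(Z_w X_w/(5pn))\Bigr].
\]
By independence of the $N^+(w)$'s the expectation factors over $w$, and Lemma~\ref{lemma:neighbourhood-concentration} bounds each factor by $\exp(pm/n)$. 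Multiplying the $n$ factors yields the clean estimate $\exp(pm - m/(80pn))$.

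Finally, I would optimize in $p$. Writing $p=\eta n^{-1/2}$ with $\eta\le\eps'$, the exponent becomes $mn^{-1/2}\bigl(\eta - 1/(80\eta)\bigr)$; since $\eta\mapsto \eta - 1/(80\eta)$ is increasing on $(0,\infty)$, the worst case is $\eta=\eps'$, and any $\eps'<1/\sqrt{80}$ produces a strictly negative coefficient. Setting $\gamma':=1/(80\eps')-\eps'>0$ then yields the claimed bound $\exp(-\gamma' mn^{-1/2})$.

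The only wrinkle I foresee is a minor bookkeeping point: Lemma~\ref{lemma:neighbourhood-concentration} is stated for an independent $p$-inclusion on $[n]$, while $N^+(w)$ is a $p$-random subset of $[n]\setminus\{w\}$. This is handled by applying Lemma~\ref{lemma:neighbourhood-concentration} to the restriction $H[[n]\setminus\{w\}]$ (still bipartite, with at most $m$ edges) on $n-1$ vertices; for $n$ large the minor loss from replacing $n$ by $n-1$ in the exponents is absorbed into $\gamma'$ by shrinking $\eps'$ slightly. Beyond that, the heavy lifting has already been done inside Lemma~\ref{lemma:neighbourhood-concentration}, so I expect no further substantial obstacles.
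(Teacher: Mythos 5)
Your proposal is correct and follows essentially the same route as the paper: expose $D$ by out-neighborhoods, observe the deterministic domination $e(H\cap\widehat D)\le\sum_w X_w$, exploit independence of the $N^+(w)$'s, invoke Lemma~\ref{lemma:neighbourhood-concentration} factor by factor, and finish with the exponential Markov inequality. The one spot where the paper does something tidier is precisely the ``wrinkle'' you flag at the end: rather than restricting $H$ to $[n]\setminus\{w\}$, the paper passes to the auxiliary digraph $D_\circ$ in which self-loops are also included independently with probability $p$, so that each $U_v=N^+_{D_\circ}(v)$ is a genuine $p$-random subset of $[n]$ and Lemma~\ref{lemma:neighbourhood-concentration} applies verbatim; the coupling $D=D_\circ\setminus\{\text{loops}\}$ gives $e(H\cap\widehat{D_\circ})\ge e(H\cap\widehat D)$ and $\Delta(D_\circ)\le\Delta(D)+1$, and this $+1$ is exactly why the threshold in the statement is $5pn-1$ rather than $5pn$. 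Your restriction fix does work, but note that after applying the lemma on $n-1$ vertices its indicator becomes $|N^+(w)|\le 5p(n-1)$, so you need $\Delta(D)\le 5pn-1$ to imply $|N^+(w)|\le 5p(n-1)$, i.e.\ $p\le 1/5$ --- harmless for large $n$, but a detail to carry through; the self-loop coupling sidesteps this bookkeeping entirely.
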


\begin{proof}
Let $D_\circ$ be the random directed graph generated in the same way as $D$ except that self-loops are also chosen with probability $p$. Then in the natural coupling between $D$ and $D_\circ$ given by removing the self-loops of $D_\circ$, we have both 
\[e(H\cap \widehat{D}_{\circ})\geq e(H\cap \widehat{D})\] and \[\Delta(D_\circ)\leq\Delta(D)+1,\] whence it follows that
\[\Prob\left(e(H\cap \widehat{D}) \geq m/16 \bigwedge \Delta(D) \leq 5pn-1\right)\leq \Prob\left(e(H\cap \widehat{D}_{\circ}) \geq m/16 \bigwedge \Delta(D_\circ) \leq 5pn\right).\]

For each $v\in [n]$, let $U_v$ be the out-neighbourhood of $v$ in $D_\circ$, and let $X_v$ be the random variable counting the number of edges of $H$ contained in $U_v$. Since any edge $e\in E(\widehat{D}_{\circ})$ is contained in $U_v$ for some $v\in [n]$, we have
\[e(H\cap \widehat{D}_{\circ})\leq\sum_{v\in [n]}X_v.\]

Next, let $Z$ be the indicator random variable of the event that $\Delta(D_\circ)\leq 5pn$. For each $v\in [n]$, let $Z_v$ be the indicator random variable of the event that $|U_v|\leq 5pn$. Clearly, $Z\leq Z_v$ for all $v\in[n]$, so it follows that
\begin{align*}
    \Prob\left(e(H\cap \widehat{D}_{\circ}) \geq m/16 \bigwedge \Delta(D_\circ) \leq 5pn\right) & \leq\Prob\left(Z\sum_{v\in [n]}X_v\geq m/16\right) \\ & \leq \Prob\left(\sum_{v\in [n]}Z_vX_v\geq m/16\right).
\end{align*}
Note that the random variables $\{Z_vX_v: v \in [n]\}$ are independent, as $Z_vX_v$ only depends on the out-neighbourhood $U_v$ of $v$, and all of these are independent; this decoupling is why we find it crucial to work with random \emph{directed} graphs. 

Now, $U_v$ is a random subset of $[n]$ with each vertex chosen independently with probability $p$, so by \cref{lemma:neighbourhood-concentration}, we have
\[\mathbb{E}\left[\exp\left({\frac{Z_vX_v}{5pn}}\right)\right]\leq \exp\left({\frac{pm}{n}}\right)\]
for every $v\in [n]$. Therefore, using an exponential moment bound, we get
\begin{align*}
    \Prob\left(\sum_{v\in [n]}Z_vX_v\geq m/16\right) & = \Prob\left(\exp\left({\frac{\sum_{v\in [n]} Z_vX_v}{5pn}}\right)\geq \exp\left({\frac{m}{80pn}}\right)\right) \\ 
    & \leq\frac{\mathbb{E}\left[\exp\left({\frac{\sum_{v \in [n]} Z_vX_v}{5pn}}\right)\right]}{\exp\left({\frac{m}{80pn}}\right)} \\ 
    & =\frac{\prod_{v\in [n]}\mathbb{E}\left[\exp\left({\frac{Z_vX_v}{5pn}}\right)\right]}{\exp\left({\frac{m}{80pn}}\right)} \\ 
    & \leq \exp\left({\frac{m}{pn}\left(p^2n-\frac{1}{80}\right)}\right).
\end{align*}
For $p\le(20\sqrt{n})^{-1}$, say, we have $p^2n-1/80\leq -1/100$, so
\[\Prob\left(\sum_{v\in [n]}Z_vX_v\geq m/16\right)\leq \exp\left({-\frac{m}{100pn}}\right)\le \exp\left({-\frac{m}{5\sqrt{n}}}\right).\]
Thus, \cref{lemma:single-H-reduced} is seen to hold with $\gamma'=1/5$ and $\eps'=1/20$.
\end{proof}

Finally, the proof of \cref{lemma:single-H} follows from  \cref{lemma:single-H-reduced} and Markov's inequality by constructing a suitable coupling.

\begin{proof}[Proof of Lemma~\ref{lemma:single-H}]
Let $H$ be an arbitrary graph on $[n]$ with $m$ edges. Choose a bipartite subgraph $H'$ of $H$ with $m'\ge m/2$ edges. Let $\gamma',\eps'>0$ be the constants in \cref{lemma:single-H-reduced}, suppose $p\le\eps'n^{-1/2}$, and pick $p'$ such that $2p' - (p')^2 = p$. Then $p' \le \eps 'n^{-1/2}$ satisfies the conditions of \cref{lemma:single-H-reduced}, and for $m,n\in\N$ sufficiently large, \cref{lemma:single-H-reduced} applies to $H'$ and $p'$, telling us that if $D \sim \vec{G}(n,p')$, then
\[
\Prob\left(e(H'\cap \widehat{D}) \ge m'/16 \bigwedge \Delta(D) \leq 5p'n-1\right) \le \exp\left({-\gamma' m'n^{-1/2}}\right).
\]

Let $\Gamma\sim G(n,p)$ be an undirected Erd\H{o}s--R\'enyi random graph. By our choices of $p$ and $p'$, we can couple $\Gamma \sim G(n,p)$ and $D\sim \vec{G}(n,p')$ such that two vertices in $\Gamma$ are adjacent if and only if there exists at least one directed edge between them in $D$. In particular, $\Delta(D) \le \Delta(\Gamma)$, so 
\[
\Delta(\Gamma) \le 2pn \implies \Delta(D) \le 5p'n - 1.
\]
Since $H'$ contains at least $m'\ge m/2$ edges of $H$, we also have
\[
e(H \cap \Gamma^2) \ge 3m/4 \implies e(H' \cap \Gamma^2) \ge m'/2.
\]

Now, if an edge lies in $\Gamma^2$, then by the coupling above, it has at least a $1/4$ chance of lying in $\widehat{D}$. Thus, for any fixed graph $\Gamma_0$ on $[n]$ with $e(H'\cap \Gamma_0^2) \ge m'/2$, we have 
\[\E[e(H' \cap \widehat{D}) \,|\, \Gamma = \Gamma_0] \ge m'/8.\] 
Since $e(H' \cap \widehat{D})$ is a random variable supported on $[0, m']$, Markov's inequality applied to $X = m' - e(H' \cap \widehat{D})$ yields
\[
\Prob\left(e(H' \cap \widehat{D}) < m'/16 \,|\, \Gamma = \Gamma_0\right) = \Prob\left(X \ge 15m'/16 \,|\, \Gamma = \Gamma_0\right) \le \frac{7m'/8}{15m'/16} = \frac{14}{15},
\]
implying that for any $\Gamma_0$ with $e(H\cap\Gamma_0^2) \ge 3m/4$, we have
\[
\Prob\left(e(H' \cap \widehat{D}) \ge m'/16 \,|\, \Gamma = \Gamma_0\right) \ge \frac{1}{15}.
\]
Summing this estimate over all possible $\Gamma_0$, it follows that
\begin{align*}
\Prob&\left(e(H\cap \Gamma^2) \ge 3m/4  \bigwedge \Delta(\Gamma) \leq 2pn\right) \\
& \le 15\Prob\left(e(H'\cap \widehat{D}) \ge m'/16 \bigwedge \Delta(D) \leq 5p'n-1\right) \\
& \le 15\exp\left({-\gamma'm'n^{-1/2}}\right) \\
& = 15\exp\left({-\gamma m n^{-1/2}}\right),
\end{align*}
where $\gamma = \gamma'/2$; this completes the proof.
\end{proof}

\section{Fractional relaxations}\label{sec:frac}
We now turn to the fractional analogue of \cref{theorem:triangle-free-existence}; to state this carefully, we need some definitions.

Following Talagrand~\citep{Talagrand4, Talagrand1} once again, we say an up-set $\cF \subset 2^X$ is {\em weakly $p$-small} if there is a certificate $a: 2^X \to \R_{\ge 0}$ such that 
for each $S \in \cF$, we have
\[	\sum_{T\subset S}a(T)\geq 1, \] and
\[
\sum_{T \subset X}a(T)p^{|T|} \leq 1/2;
\]
thus, if $\cF$ is weakly $p$-small, then we have a simple proof, as before, of the fact that $p_c(\cF) \ge p$.
Analogously, we say a down-set $\cF \subset 2^X$ is {\em weakly $p$-small} if there is a certificate $a: 2^X \to \R_{\ge 0}$ such that 
for each $S \in \cF$, we have
\[	\sum_{T\supset S}a(T)\geq 1, \] and
\[
\sum_{T \subset X}a(T)(1-p)^{|X \setminus T|} \leq 1/2;
\]
thus, if $\cF$ is weakly $p$-small, then we again have a simple proof of the fact that $p_c(\cF) \le p$.

Now, we define the {\em fractional expectation-threshold $q_f(\cF)$} of a monotone $\cF$ as follows: for an up-set $\cF$, this is the \emph{largest} value $p \in [0,1]$ for which $\cF$ is weakly $p$-small, and for a down-set $\cF$, this is the \emph{smallest} value $p \in [0,1]$ for which $\cF$ is weakly $p$-small.

The statement that $q_f(\cT)=\Omega(1/\sqrt{n})$ is equivalent to the following modification of \cref{theorem:triangle-free-existence}.
\begin{theorem}\label{theorem:triangle-free-existence-fractional}
There exists a universal $\delta>0$ such that the following holds for all sufficiently large $n \in \N$. If $\cH$ is the collection of all graphs on $[n]$ and $a: \cH \to \R_{\ge 0}$ satisfies
\[\sum_{H\in\cH}a(H)\exp\left({(-\delta e(H))/\sqrt{n}}\right)<\frac{1}{2},\]
then there is a triangle-free graph $G$ on $[n]$ such that
\[\sum_{H\in\cG(G)}a(H) < 1,\]
where $\cG(G) = \{H\in\cH: G \cap H = \emptyset\}$.
\end{theorem}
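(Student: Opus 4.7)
The plan is to follow the proof of \cref{theorem:triangle-free-existence} closely, replacing the union bound on the events $X_H$ by Markov's inequality applied to the weighted sum $S := \sum_{H \in \cG(G)} a(H)$. Keeping all notation from Section~\ref{sec:proof}, take $\Gamma \sim G(n, \eps n^{-1/2})$ and let $G$ be any maximal triangle-free subgraph of $\Gamma$. Since $\{G \cap H = \emptyset\} \subseteq X_H$, combining \cref{lemma:single-H} with the bound $\Prob(X_H \wedge \overline{Y_H}) \le (1-p)^{e(H)/4}$ gives $\Prob(\overline{Z} \wedge G \cap H = \emptyset) \le 16 \exp(-\gamma' e(H)/\sqrt{n})$, where $\gamma' := \min(\gamma, \eps/4)$. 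Since this probability is also trivially at most $1$,
\[
\E[\mathbbm{1}_{\overline{Z}} S] = \sum_H a(H)\,\Prob(\overline{Z} \wedge G \cap H = \emptyset) \le \sum_H a(H) \min(1, 16 e^{-\gamma' e(H)/\sqrt{n}}).
\]

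The hard part is bounding the right-hand side strictly below $1$. In the integer case of \cref{theorem:triangle-free-existence}, the pointwise consequence $e^{-\delta e(H)/\sqrt{n}} < 1/2$ of the hypothesis yielded an additional factor-of-$16$ reduction in $\sum_H e^{-5\delta e(H)/\sqrt{n}}$; in the fractional setting, however, the analogous pointwise bound $a(H) e^{-\delta e(H)/\sqrt{n}} < 1/2$ gives no such reduction, since $a(H)$ may be large when $e(H)$ is small. Naively setting $\delta = \gamma'$ only gives $\E[\mathbbm{1}_{\overline{Z}} S] \le 16 \cdot (1/2) = 8$. To get around this, we split at a threshold $M := (\sqrt{n}/\gamma')\ln L$ for a constant $L$ to be optimized: the contribution from $e(H) \le M$ (bounded using $\min \le 1$) is at most $L^{\delta/\gamma'}/2$ by the hypothesis, and the contribution from $e(H) > M$ (bounded using the exponential) is at most $8 L^{-(1 - \delta/\gamma')}$. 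Optimizing yields $L^* = 16(1-r)/r$ with $r := \delta/\gamma'$, and the resulting bound
\[
f(r) := \tfrac{1}{2}\left(\tfrac{16(1-r)}{r}\right)^{r} + 8\left(\tfrac{r}{16(1-r)}\right)^{1-r}
\]
tends to $1/2$ as $r \to 0$. Taking $\delta = \gamma'/K$ for a sufficiently large universal constant $K$ (for instance, $K = 10$ already gives $f(r) \approx 0.91$) therefore yields $\E[\mathbbm{1}_{\overline{Z}} S] \le f(r) < 1 - \eta$ for some universal $\eta > 0$.

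Since $\Prob(Z) \to 0$, Markov's inequality then gives
\[
\Prob(S \ge 1) \le \Prob(Z) + \Prob(\overline{Z} \wedge S \ge 1) \le \Prob(Z) + \E[\mathbbm{1}_{\overline{Z}} S] < 1
\]
for all $n$ large enough, so $\Prob(S < 1) > 0$ and the desired triangle-free graph $G$ exists.
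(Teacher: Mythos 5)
Your proof is correct, and the main computation agrees with the paper's up to the point where one must absorb the constant prefactor $16$ (which in the integer case was handled by the pointwise observation $e^{-\delta e(H)/\sqrt n} < 1/2$, unavailable here). You handle this by splitting the sum over $H$ at a threshold $M \asymp \sqrt{n}$ on $e(H)$, bounding the small-$e(H)$ part by $L^{r}/2$ using $\min(\cdot,1)\le 1$ and the large-$e(H)$ part by $8L^{-(1-r)}$ using the exponential, then optimizing over $L$; taking $r = \delta/\gamma'$ small gives a bound tending to $1/2$. The paper instead uses a one-line interpolation inequality: since $\min(a,1)\le a^{\theta}$ for any $a\ge 0$ and $\theta\in[0,1]$, taking $\theta=1/8$ gives
\[\min\left(16\,e^{-c\,e(H)/\sqrt n},\,1\right)\le 16^{1/8}\,e^{-(c/8)e(H)/\sqrt n}=\sqrt 2\,e^{-\delta e(H)/\sqrt n}\]
with $\delta=c/8$, so that the sum is at most $\sqrt 2\cdot(1/2)=1/\sqrt 2<1$. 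These are really the same idea — your threshold split is an unrolled, hand-optimized form of the interpolation — but the paper's route is a single line and avoids the calculus. One minor stylistic difference: you work with $\E[\mathbbm{1}_{\overline Z}S]$ and close with Markov on $\{S\ge 1\}$, whereas the paper bounds $\E[S\mid\overline Z]$ and concludes directly that some outcome in $\overline Z$ has $S<1$; these differ only by the $\Prob(\overline Z)=1-o(1)$ normalization and are equally valid.
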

\begin{proof}
This can be proved in a manner similar to \cref{theorem:triangle-free-existence}. In particular, let $\Gamma=G(n,p)$, let $G$ be any maximal triangle-free subgraph of $\Gamma$, and define $X_H$, $Y_H$, and $Z$ as in the proof of \cref{theorem:triangle-free-existence}. Recall that if $\overline{X_H}$ holds, then $G\cap H\neq\emptyset$. Thus
\begin{align*}
    \mathbb{E}\left[\left.\sum_{H \in \cG(G)}a(H)\,\right|\,\overline Z\right] & = \sum_{H\in\cH}a(H)\Prob\left(G\cap H=\emptyset\,|\,\overline {Z}\right) \\ & \leq\sum_{H\in\cH}a(H)\Prob\left(X_H\,|\,\overline {Z}\right) \\ & =\sum_{H\in\cH}a(H)\frac{\Prob\left(X_H\wedge\overline{Z}\right)}{\Prob\left(\overline{Z}\right)}.
\end{align*}
By the bounds on $\Prob(X_H\wedge\overline{Y_H})$ and $\Prob(Y_H\wedge\overline{Z})$ in the proof of \cref{theorem:triangle-free-existence}, we have
\[\Prob\left(X_H\wedge\overline{Z}\right)\leq 16\exp\left({-ce(H)n^{-1/2}}\right)\]
for some absolute constant $c>0$. Of course, this probability is bounded above by $1$, so since $\Prob(\overline{Z})=1-o(1)$, we get
\[\mathbb{E}\left[\left.\sum_{H \in \cG(G)}a(H)\,\right|\,\overline Z\right]\leq (1+o(1))\sum_{H\in\cH}a(H)\min\left(16\exp\left({-ce(H)n^{-1/2}}\right),1\right).\]
Putting $\delta={c}/{8}$, we then have
\[\min\left(16\exp\left({-ce(H)n^{-1/2}}\right),1\right) \leq \sqrt{2}\exp\left({-\delta e(H)n^{-1/2}}\right),\]
as the right hand side is the geometric mean of one copy of $16 \exp\left({-ce(H)n^{-1/2}}\right)$ and seven copies of $1$. Thus, if 
\[\sum_{H\in\cH}a(H)\exp\left({(-\delta e(H))/\sqrt{n}}\right)<\frac{1}{2},\]
then
\begin{align*}
\mathbb{E}\left[\left.\sum_{H \in \cG(G)}a(H)\,\right|\,\overline Z\right] &\leq \left(\sqrt{2}+o(1)\right)\sum_{H\in\cH}a(H)\exp\left({(-\delta e(H))/\sqrt{n}}\right)\\ &\le\frac{1}{\sqrt{2}}+o(1)<1,
\end{align*}
proving that there is some triangle-free graph $G$ on $[n]$ for which
\[\sum_{H \in \cG(G)}a(H)<1,\]
as required.
\end{proof}

\section{Uniform covers}\label{sec:unicov}
Our goal in this section is to collect together some constructions that yield good upper bounds on the expectation-threshold of $\cT$ (and consequently, its fractional expectation-threshold as well, since $q_f(\cT) \le q(\cT)$).

For $0\leq m\leq\binom{n}{2}-{n^2}/{4}$, let $f(m,n)$ be the smallest integer $k$ such that there exists a family $\cH$ of $k$ graphs on $[n]$ with $e(H)=\binom{n}{2}-m$ for all $H\in\cH$ such that every triangle-free graph is contained in some $H\in\cH$.

Notice that $f(m,n)$ exists for all $m$ in the given range, as all triangle-free graphs have at most ${n^2}/{4}$ edges. The relationship between the expectation-threshold $q(\cT)$ and the function $f$ is as follows.
\begin{proposition}\label{prop:threshold-vs-family}
For all $0\leq m\leq\binom{n}{2}-{n^2}/{4}$, we have $q(\cT) \le \log(2f(m,n))/m$.
\end{proposition}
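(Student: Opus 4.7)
The plan is to unpack the definitions and observe that this is essentially immediate from the definition of $f(m,n)$. Recall that $q(\cT)$ is the smallest $p \in [0,1]$ for which $\cT$ is $p$-small, i.e., for which there exists a certificate family $\cG \subset 2^X$ (with $X = E(K_n)$, so $|X| = \binom{n}{2}$) such that $\cT \subset \cG^{\downarrow}$ and
\[
\sum_{S \in \cG}(1-p)^{|X \setminus S|} \le \frac{1}{2}.
\]

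First, I would fix $m$ and take $\cH$ to be an optimal covering family witnessing $f(m,n)$, so $|\cH| = f(m,n)$, each $H \in \cH$ has exactly $\binom{n}{2} - m$ edges, and every triangle-free graph on $[n]$ is (as an edge set) contained in some $H \in \cH$. Viewing each $H$ as an element of $2^X$, the covering property is precisely the statement that $\cT \subset \cH^{\downarrow}$, so $\cH$ is a legitimate candidate certificate.

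Next, I would evaluate the weight of this certificate. Since $|X \setminus H| = m$ for every $H \in \cH$, we get
\[
\sum_{H \in \cH}(1-p)^{|X \setminus H|} = f(m,n)(1-p)^m.
\]
To make this at most $1/2$, using the standard bound $1 - p \le e^{-p}$, it suffices to ensure $f(m,n) e^{-pm} \le 1/2$, which rearranges to $p \ge \log(2f(m,n))/m$.

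Setting $p := \log(2f(m,n))/m$ (and observing this lies in $[0,1]$ for $m$ in the stated range, since then $f(m,n)(1-p)^m \le 1/2$ forces $p < 1$), the family $\cH$ certifies that $\cT$ is $p$-small, and hence $q(\cT) \le p = \log(2f(m,n))/m$. There is no real obstacle here; this proposition is a direct translation between the combinatorial covering quantity $f(m,n)$ and the first-moment expression defining $p$-smallness, with the only inequality used being $1-p \le e^{-p}$.
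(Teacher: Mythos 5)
Your proof is correct and takes essentially the same approach as the paper: both view the optimal covering family for $f(m,n)$ as a certificate of $p$-smallness for the down-set $\cT$, compute its weight $f(m,n)(1-p)^m$, and use $(1-p)^m \le e^{-pm}$ to extract the bound. The only cosmetic difference is that the paper argues backward from $p = q(\cT)$ (noting the certificate weight must be at least $1/2$ there), whereas you argue forward by setting $p = \log(2f(m,n))/m$ and checking the weight drops below $1/2$; these are the same calculation.
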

\begin{proof}
Take a family $\cH$ of $f(m,n)$ graphs, each with $\binom{n}{2}-m$ edges, that covers $\cT$. The probability that $G(n,p)$ is contained in one of these graphs is $(1-p)^m$. Setting $p = q(\cT)$, we must have
\[f(m,n)\left(1-p\right)^m\geq 1/2\]
by the minimality of the expectation-threshold; noting that $(1-p)^m\leq e^{-pm}$, the conclusion follows.
\end{proof}

We now collect together estimates for $f(m,n)$ that hold in various regimes.
\begin{proposition}\label{prop:bounds-on-f}
We have the following bounds on $f(m,n)$ for all sufficiently large $n \in \N$.
\begin{enumerate}
    \item\label{complete}For all $0\leq m\leq\binom{n}{2}-{n^2}/{4}$, we have \[f(m,n)=\exp\left({\Omega\left(\max\left(m/\sqrt{n},\sqrt{m}\right)\right)}\right).\]
    \item\label{ramsey} There exists a universal $c>0$ such that if $m<cn\log n$, then \[f(m,n)\leq \exp\left({2\sqrt{m}\log n}\right).\]
    \item\label{containers} For all $0<c<1/4$, there exists $C = C(c) > 0$ such that \[f\left(cn^2,n\right)\leq \exp\left({Cn^{3/2}\log n}\right).\]
\end{enumerate}
\end{proposition}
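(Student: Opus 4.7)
For part (\ref{complete}), the bound $\exp(\Omega(m/\sqrt n))$ would follow directly from \cref{theorem:triangle-free-existence} applied in contrapositive form: if $\cH$ is a covering family for $\cT$ consisting of graphs each with $\binom n 2 - m$ edges, then passing to the complementary family $\cH^c = \{H^c : H \in \cH\}$ of graphs each with $m$ edges, no triangle-free graph can share an edge with every $H^c \in \cH^c$ (else that triangle-free graph would fail to be covered), and \cref{theorem:triangle-free-existence} then forces $|\cH|\exp(-\delta m /\sqrt n) \ge 1/2$. For the $\sqrt m$ part, the plan is to test the cover against all $2^n$ ordered bipartitions $(A,B)$ of $[n]$: each complete bipartite graph $K_{A,B}$ is triangle-free, and $K_{A,B} \subseteq H$ is equivalent to $H^c$ having no edge crossing the partition. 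The number of bipartitions compatible with a fixed $m$-edge graph $S$ in this sense is exactly $2^{c(S)}$, where $c(S)$ is the number of connected components; since $S$ requires at least $\lceil \sqrt{2m}\rceil$ non-isolated vertices and its non-isolated components have size at least $2$, one gets $c(S) \le n - \lceil \sqrt{2m}\rceil/2$. Summing $2^{c(H^c)}$ over the cover and comparing with $2^n$ then yields $f(m,n) \ge 2^{\Omega(\sqrt m)}$.

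For part (\ref{ramsey}), the plan is a direct construction using the Ajtai--Koml\'os--Szemer\'edi bound $r(3,k) = O(k^2/\log k)$. Choose $k$ to be the least integer with $\binom k 2 \ge m$, so that $k = \Theta(\sqrt m)$; for $c$ small enough in terms of the AKS constant, the hypothesis $m < cn\log n$ ensures $r(3,k) \le n$, and hence every triangle-free graph on $[n]$ has an independent set of size $k$. For each $k$-subset $I \subseteq [n]$, fix any subgraph $S_I \subseteq \binom I 2$ with exactly $m$ edges and set $H_I = K_n \setminus S_I$, a graph with exactly $\binom n 2 - m$ edges. Every triangle-free $T$ has an independent set $I$ of size $k$, whence $E(T) \cap S_I = \emptyset$, so $T \subseteq H_I$. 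Thus $f(m,n) \le \binom n k \le n^k \le \exp(2\sqrt m \log n)$ for all sufficiently large $n$.

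For part (\ref{containers}), the plan is to invoke the hypergraph containers method of~\citep{cont1, cont2} applied to the $3$-uniform hypergraph on $\binom{[n]}{2}$ whose hyperedges are the triangles of $K_n$. The standard output is a family of $\exp(O(n^{3/2}\log n))$ container graphs on $[n]$, each with at most $(1/4+o(1))n^2$ edges, such that every triangle-free graph is a subgraph of some container. Given $c<1/4$, each container has strictly fewer than $\binom n 2 - cn^2$ edges for large $n$, so we can freely pad each container with extra edges (chosen, say, in lexicographic order) to reach exactly $\binom n 2 - cn^2$ edges without destroying the covering property. This yields $f(cn^2, n) \le \exp(Cn^{3/2}\log n)$ for some $C = C(c) > 0$.

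I expect the main subtlety to lie in part (\ref{ramsey}): one must choose the integer $k$ so as to simultaneously satisfy $\binom k 2 \ge m$ (so that $S_I$ fits inside $K_I$) and $r(3,k) \le n$ (so the covering works), and threading these two inequalities through the AKS bound pins down the admissible constant $c$ as (roughly) the reciprocal of four times the AKS constant. The remaining parts are essentially applications of existing machinery and should not pose further difficulty.
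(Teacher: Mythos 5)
Your proof is correct and, at the level of ideas, matches the paper's; there are a few small presentational differences worth flagging. For the $\exp(\Omega(\sqrt m))$ lower bound in part~\eqref{complete}, the paper picks a \emph{random} complete bipartite graph and computes $\Prob(G\subset H)\le 2^{-e(T)}$ where $T$ is a spanning forest of the non-isolated part of $H^c$, then union-bounds; your version counts all $2^n$ ordered bipartitions and observes each $H$ covers exactly $2^{c(H^c)}$ of them. These are dual formulations of the same pigeonhole count ($e(T)=n-c(H^c)$), and your deterministic version avoids having to justify the independence claim for the spanning forest edges. For the $\exp(\Omega(m/\sqrt n))$ bound, you deduce it directly from Theorem~\ref{theorem:triangle-free-existence} in contrapositive, whereas the paper routes through Proposition~\ref{prop:threshold-vs-family}; these are equivalent. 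In part~\eqref{ramsey}, the paper's cover $\{K_n - B\}$ (with $B$ a clique of size $2\sqrt m$) has $\binom{2\sqrt m}{2}\ge m$ non-edges, so the graphs actually have \emph{too few} edges to meet the exact edge count in the definition of $f(m,n)$, and one must implicitly pad; you handle this cleanly by fixing $S_I\subset\binom{I}{2}$ with \emph{exactly} $m$ edges. Part~\eqref{containers} is identical to the paper's, and your explicit remark about padding containers up to exactly $\binom n 2 - cn^2$ edges fills a small gap that the paper leaves implicit.
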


Before turning to the proof of these estimates, notice that \cref{prop:threshold-vs-family} and \cref{ramsey} in the above proposition combine to show that
\[q_f(\cT) \le q(\cT) = O\left(\frac{\sqrt{n\log n}\log n}{n\log n}\right)=O\left(\sqrt{\frac{\log n}{n}}\right),\]
as was earlier claimed.

\begin{proof}[Proof of \cref{prop:bounds-on-f}]
We start with \cref{complete}. The first inequality here, i.e., $f(m,n)=\exp({\Omega(m/\sqrt{n})})$, comes from \cref{prop:threshold-vs-family}, and our proof of the fact that $q(\cT) =\Omega(1/\sqrt{n})$. For the other bound, consider a random complete bipartite graph $G$ in which each vertex is independently sent to either partition class of $G$ with probability $1/2$. For any graph $H$ with $e(H)=\binom{n}{2}-m$, let $T$ be a spanning forest of $H^c$ (i.e. the union of spanning trees of each connected component), and note that $e(T)\geq v(T)/2=\Omega(\sqrt{m})$. It is not difficult to see that the edges of $T$ are contained in $G$ with independent probabilities of $1/2$, so
    \[\Prob(G\subset H)\leq\Prob(G\cap T=\emptyset)=2^{-e(T)}=\exp\left({-\Omega\left(\sqrt{m}\right)}\right).\]
Since such a $G$ is always triangle-free, if $\cH$ is a family of graphs, each with $\binom{n}{2}-m$ edges, that covers $\cT$, a union bound yields $|\cH|=\exp({\Omega(\sqrt{m})})$, and the second inequality follows.
    
Next, we turn to \cref{ramsey}. Ajtai, Koml\'os and Szemer\'edi~\citep{AKSRamsey} showed that $r(3,n)=O(n^2/\log n)$, i.e., there is a universal $c>0$ such that $r(3,2\sqrt{m})\leq n$ for all $m<cn\log n$. This implies that we can cover $\cT$ by taking all graphs of the form $K_n-B$ where $B$ is a clique of size $2\sqrt{m}$. There are $\binom{n}{2\sqrt{m}}$ such graphs, and they each have $\binom{2\sqrt{m}}{2}\geq m$ non-edges, so this shows that
    \[f(m,n)\leq\binom{n}{2\sqrt{m}}\leq \exp\left({2\sqrt{m}\log n}\right).\]
    
Finally, for \cref{containers}, a standard application of the machinery of hypergraph containers (as in~\citep{cont2}, for example) shows that for all $\varepsilon>0$, there exists $C>0$ such that for all $n\in \N$, there is a family of $\exp({Cn^{3/2}\log n})$ graphs on $[n]$, each with at most $(\frac{1}{4}+\varepsilon)n^2$ edges, that collectively contain all triangle-free graphs on $[n]$; the claimed bound follows by adding edges to each of these graphs until each of them has exactly $\binom{n}{2}-cn^2$ edges.
\end{proof}

\section{Further Questions}\label{sec:conc}
Our results leave open the problems of determining the asymptotic growth rates of $q(\cT)$ and $q_f(\cT)$. We have shown that both these thresholds are $\Omega(1/\sqrt{n})$ and $O(\sqrt{\log n/ n})$, but getting more precise estimates remains an interesting open problem.
\begin{problem}\label{basicprob}
What are the asymptotic growth rates of $q(\cT)$ and $q_f(\cT)$?
\end{problem}

Towards Problem~\ref{basicprob}, notice that \cref{prop:bounds-on-f} effectively bounds $f(m,n)$ when $m= O(n\log n)$ and $m=\Theta(n^2)$, i.e., up to a logarithmic factor in the exponent, $f(m,n)$ behaves like $\exp({\sqrt{m}})$ when $m = O(n\log n)$, and like $\exp({n^{3/2}})$ when $m=\Theta(n^2)$. This motivates the following question.
\begin{problem}
What is the behaviour of $f(m,n)$ for $n\log n\ll m\ll n^2$? Is the bound $f(m,n)=\exp({\Omega(m/\sqrt{n})})$ sharp in this regime (possibly up to a logarithmic factor in the exponent)?
\end{problem}

Another natural question, whose answer might help resolve Problem~\ref{basicprob}, is as follows.
\begin{problem}
For $m = O(n\log n)$ and $m=\Theta(n^2)$, can we eliminate the logarithmic gaps from our bounds on $f(m,n)$?
\end{problem}

Finally, we remark that our machinery is specialised to the case of triangle-free graphs, and does not prove analogous results for larger cliques; this prompts the following question.

\begin{problem}
For $r\ge 4$ and $\cF_{r,n}$ the down-set of all $K_r$-free graphs on $[n]$, what are the asymptotic growth rates of $q(\cF_{r,n})$ and $q_f(\cF_{r,n})$?
\end{problem}

\section*{Acknowledgements}
We are grateful to Noga Alon, Jacob Fox, Simon Griffiths, Jeff Kahn and Wojciech Samotij for stimulating conversations. The second author was supported by NSF grant DMS-2103154, and the third author was supported by NSF grants CCF-1814409 and DMS-1800521.

\bibliographystyle{amsplain}
\bibliography{downset_thresh}

\providecommand{\bysame}{\leavevmode\hbox to3em{\hrulefill}\thinspace}
\providecommand{\MR}{\relax\ifhmode\unskip\space\fi MR }
% \MRhref is called by the amsart/book/proc definition of \MR.
\providecommand{\MRhref}[2]{%
  \href{http://www.ams.org/mathscinet-getitem?mr=#1}{#2}
}
\providecommand{\href}[2]{#2}
\begin{thebibliography}{10}

\bibitem{AKSRamsey}
M.~Ajtai, J.~Koml\'{o}s, and E.~Szemer\'{e}di, \emph{A note on {R}amsey
  numbers}, J. Combin. Theory Ser. A \textbf{29} (1980), 354--360.

\bibitem{AS}
N.~Alon and J.~H. Spencer, \emph{The probabilistic method}, fourth ed., Wiley
  Series in Discrete Mathematics and Optimization, John Wiley \& Sons, Inc.,
  Hoboken, NJ, 2016.

\bibitem{cont1}
J.~Balogh, R.~Morris, and W.~Samotij, \emph{Independent sets in hypergraphs},
  J. Amer. Math. Soc. \textbf{28} (2015), 669--709.

\bibitem{BBbook}
B.~Bollob\'{a}s, \emph{Random graphs}, Cambridge Studies in Advanced
  Mathematics, vol.~73, Cambridge University Press, Cambridge, 2001.

\bibitem{Erdos}
P.~Erd\H{o}s, \emph{Graph theory and probability ii}, Canad. J. Math.
  \textbf{13} (1961), 346--352.

\bibitem{ER}
P.~Erd\H{o}s and A.~R\'{e}nyi, \emph{On the evolution of random graphs}, Magyar
  Tud. Akad. Mat. Kutat\'{o} Int. K\"{o}zl. \textbf{5} (1960), 17--61.

\bibitem{FKNP}
K.~Frankston, J.~Kahn, B.~Narayanan, and J.~Park, \emph{Thresholds versus
  fractional expectation-thresholds}, Ann. of Math. \textbf{194} (2021),
  475--495.

\bibitem{JLR}
S.~Janson, T.~\L{}uczak, and A.~Rucinski, \emph{Random graphs},
  Wiley-Interscience Series in Discrete Mathematics and Optimization,
  Wiley-Interscience, New York, 2000.

\bibitem{KK}
J.~Kahn and G.~Kalai, \emph{Thresholds and expectation thresholds}, Combin.
  Probab. Comput. \textbf{16} (2007), 495--502.

\bibitem{KimRamsey}
J.~H. Kim, \emph{The {R}amsey number {$R(3,t)$} has order of magnitude
  {$t^2/\log t$}}, Random Structures Algorithms \textbf{7} (1995), 173--207.

\bibitem{ParkPham}
J.~Park and H.T. Pham, \emph{A proof of the {K}ahn-{K}alai conjecture},
  Preprint, \url{https://arxiv.org/abs/2203.17207}.

\bibitem{cont2}
D.~Saxton and A.~Thomason, \emph{Hypergraph containers}, Invent. Math.
  \textbf{201} (2015), 925--992.

\bibitem{Spencer}
J.~Spencer, \emph{Ramsey's theorem -- a new lower bound}, J. Combin. Theory
  Ser. A \textbf{18} (1975), 108--115.

\bibitem{Talagrand4}
M.~Talagrand, \emph{Are all sets of positive measure essentially convex?},
  Geometric aspects of functional analysis ({I}srael, 1992--1994), Oper. Theory
  Adv. Appl., vol.~77, Birkh\"{a}user, Basel, 1995, pp.~295--310.

\bibitem{Talagrand1}
\bysame, \emph{The generic chaining}, Springer Monographs in Mathematics,
  Springer-Verlag, Berlin, 2005.

\bibitem{Talagrand}
\bysame, \emph{Are many small sets explicitly small?}, {P}roceedings of the
  2010 {ACM} {I}nternational {S}ymposium on {T}heory of {C}omputing, 2010,
  pp.~13--35.

\end{thebibliography}
\end{document}